\newtheorem{thm}{Theorem}[section]
\newtheorem{prop}[thm]{Proposition}
\newtheorem{lemma}[thm]{Lemma}
\theoremstyle{remark}
\newtheorem{rmk}{Remark}[section]
\renewcommand{\le}{\leqslant}
\newcommand{\la}{\langle}
\newcommand{\ra}{\rangle}
\newcommand{\tz}{\tau_\zeta}
\newcommand{\iz}{\int_\zeta}
\newcommand{\izpm}{\int_{\zeta,\pm}}
\newcommand{\zz}{Z_\zeta}
\newcommand{\pz}{\psi_\zeta}
\newcommand{\jz}{\Gamma_{\!\zeta}}
\newcommand{\uz}{U_{\!\zeta}}
\newcommand{\uzb}{\bar \uz}
\newcommand{\h}{{\mathrm h}}
\newcommand{\e}[1]{\epsilon_{#1}}
\newcommand{\id}{\mathrm{Id}}
\newcommand{\tr}{\mathrm{tr}}
\newcommand{\trz}{\mathrm{tr}_q^\omega}
\newcommand{\trze}[1]{\mathrm{Tr}_{\zeta,-\e {#1}}}
\newcommand{\ize}[1]{\int_{\zeta,-\e {#1}}}
\newcommand{\trzpm}{\mathrm{Tr}_{\zeta,\pm}}
\newcommand{\R}{{\mathscr R}}
\newcommand{\D}{{\mathscr D}}
\newcommand{\Dz}{\D_\zeta}
\newcommand{\Rz}{\R_\zeta}
\newcommand{\Tz}{\Theta_\zeta}
\newcommand{\br}{{\bf r}}
\newcommand{\bbz}{\mathbb{Z}}
\newcommand{\bbr}{\mathbb{R}}
\newcommand{\bbq}{\mathbb{Q}}
\newcommand{\fsl}[1]{\mathfrak{s}\mathfrak{l}_{#1}}
\newcommand{\ve}[1]{V_{#1}}
\newcommand{\trir}{\vartriangleright}
\newcommand{\A}{\mathscr{A}}
\newcommand{\U}{\mathscr{U}}
\newcommand{\uh}{U_h}
\newcommand{\tu}{\tilde\U}
\renewcommand{\tt}{\tilde\otimes}
\newcommand{\gh}{\Gamma_h}
\newcommand{\Inv}{\mathrm{Inv}}
\title[Relation between quantum invariants]{On the relation between the WRT invariant and the Hennings invariant}
\author{Qi Chen,\ \  Srikanth Kuppum\ \  and\ \  Parthasarathy Srinivasan}
\address{Department of Mathematics, Winston-Salem State University\newline\indent Winston Salem, NC 27110, USA}
\email{chenqi@wssu.edu}
\address{Department of Mathematics,
Indian Institute of Technology Guwahati\newline\indent 
Guwahati 781039, Assam, India}
\email{kvsrikanth@iitg.ernet.in}
\address{Mathematical Biosciences Institute, The Ohio State University\newline\indent  Columbus, OH 43210, USA}
\email{psrinivasan@mbi.osu.edu}
\begin{document}

\begin{abstract}
The purpose of this note is to provide a simple relation between the Witten-Reshetikhin-Turaev $SO(3)$ invariant and the Hennings invariant associated to quantum $\fsl2$.
\end{abstract}

\maketitle
\addtocounter{footnote}{1}

\section{Introduction}
For any quantum group associated to a finite dimensional Lie algebra, there are two ways to obtain invariants of closed oriented 3-manifolds. The first one, predicted by Witten and rigorously constructed by Reshetikhin and Turaev, uses the representation theory of the quantum group at roots of unity. The second one, introduced by Hennings in \cite{hennings}, uses the integral of the quantum group at roots of unity. While the first one has been intensively studied because of its connection to various areas of mathematics and physics, the second one receives relatively low attention. In this note we will show that despite very different origins they are essentially the same for rational homology 3-spheres.

For any 3-manifold $M$, denote by $\h(M)$ the order of $H_1(M)$ if it is finite and $0$ otherwise. 
For any complex root of unity $\zeta$ of odd order, let $\tz(M)$ be the Witten-Reshetikhin-Turaev $SO(3)$ invariant (or WRT invariant for short),  defined first in \cite{km}, and $\pz(M)$ be the Hennings invariant associated to quantum $\fsl2$ at $\zeta$. These two invariants are related by the following theorem.

\begin{thm}\label{thm1}
If $\zeta$ is a complex root of unity of odd order $>1$ then
\begin{equation}\label{eqn1}
\pz(M) = \h(M)\, \tz(M).
\end{equation}
\end{thm}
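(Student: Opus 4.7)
The plan is to reduce the theorem to a comparison of the Hennings and WRT surgery formulas on a common presentation of $M$. Let $N > 1$ denote the odd order of $\zeta$. I would fix a framed link $L = L_1 \cup \cdots \cup L_n \subset S^3$ presenting $M$ by integral surgery, with symmetric linking matrix $B$, and recall that $\h(M) = |\det B|$ when $\det B \neq 0$ and $\h(M) = 0$ otherwise. The strategy is then to show that, after expanding the Hennings invariant on $L$ in a suitable basis, the output splits as $\tz(M)$ multiplied by a quadratic Gauss sum in $B$ that evaluates to $\h(M)$.

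I would first write $\pz(M)$ via the Kauffman-Radford surgery formula: decorate a diagram of $L$ using the universal $R$-matrix of $U_q(\fsl2)$ at $\zeta$, together with the ribbon element and pivotal structure, apply the two-sided integral $\mu$ around each component, and divide by an explicit normalization involving $\mu$ evaluated on $\pm 1$-framed unknots (absorbing the signature of $B$). In parallel, express $\tz(M)$ via the Reshetikhin-Turaev formula with each component colored by the $SO(3)$-Kirby color, a weighted sum of the odd-dimensional simples of $U_q(\fsl2)$. The key algebraic step is to expand $\mu$ as a linear combination of quantum traces, split by the parity of the highest weight, so that the \emph{integer-spin} part reproduces precisely the $SO(3)$-Kirby coloring while the \emph{half-integer-spin} part contributes a family of scalar corrections indexed by $v \in (\bbz/N\bbz)^n$.

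Carrying out this expansion on $L$, the half-integer-spin contribution assembles into a quadratic Gauss sum of the shape $\sum_{v \in (\bbz/N\bbz)^n} \zeta^{v^T B v}$ (up to framing-dependent shifts), coupled to the same colored data that defines $\tz(M)$. The main obstacle is evaluating this Gauss sum and identifying the result with $\h(M)$: when $\det B$ is invertible modulo $N$, standard multivariable Gauss sum formulas yield a factor proportional to $|\det B| = \h(M)$, while when $N$ divides $\det B$ one must show that either the Gauss sum vanishes or that $\tz(M)$ itself vanishes, matching $\h(M)\tz(M) = 0$. Keeping track of framing corrections and ribbon normalizations between the two conventions, and verifying that the prefactors combine cleanly with the Gauss sum, is the most delicate bookkeeping.
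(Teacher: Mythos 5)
Your key algebraic step --- expanding the integral $\iz$ as a linear combination of quantum traces of $\uz$-modules, split by parity of highest weight --- is impossible, and this impossibility is precisely the point of the whole problem. Quantum traces are additive on composition series, and the central elements $N_i'$ in Kerler's basis (\ref{zb}) are nilpotent ($N_i'N_j'=0$), hence act by zero on every simple module; therefore $\tr_q^V(N_i')=0$ for \emph{every} finite-dimensional $\uz$-module $V$. But by (\ref{Np}) the integral satisfies $\iz(N_i') = \zeta^2[2i+1]^3(\zeta-\zeta^{-1})^\ell/\ell^3 \ne 0$ for $i<\hbar$, so $\iz$ lies outside the span of all module traces. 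One can see the failure concretely on lens spaces: any trace expansion would make the Hennings bracket of an $f$-framed unknot a sum $\sum_i c_i\,\zeta^{\mp 2i(i+1)f}$, periodic in $f$ of period $\ell$, whereas by (\ref{rn}) one actually has $\iz(\br^{\mp f}) = \mp\frac{f\zeta^2}{\ell^2}(\zeta-\zeta^{-1})^\ell\sum_i \zeta^{\mp2i(i+1)f}[2i+1]^2$, which grows \emph{linearly} in $f$ --- exactly as needed to produce $\h(L(f,1))=|f|$. Your Gauss-sum arithmetic has the same defect: a nondegenerate quadratic Gauss sum $\sum_{v\in(\bbz/\ell\bbz)^n}\zeta^{v^TBv}$, after dividing by the $\pm1$-framed unknot normalizations (themselves Gauss sums), yields a Jacobi-symbol-type phase of modulus $1$ (this is the Kirby--Melvin $U(1)$ factor), never the integer $|\det B|$; e.g.\ for $L(5,1)$ at $\ell=3$ your correction factor has modulus $1$ while $\h=5$. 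Finally, in the degenerate case $\h(M)=0$ the needed statement is $\pz(M)=0$, which is Ohtsuki's theorem \cite{o3} and cannot follow from your framework, since $\tz(M)$ need not vanish there.

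The factor $\h(M)$ in fact comes from the nilpotent part of the center, not from summation over colors, and the paper's proof is organized around this. It first reduces, via Ohtsuki's stabilization by lens spaces \cite{o1} and multiplicativity under connected sum, to surgery along \emph{algebraically split} links, where $\h(M)=|f_1\cdots f_m|$ is the product of the framings. For a $0$-framed bottom tangle presentation it shows (Lemma \ref{cen}, using Habiro's theorem that $\gh(T)\in\Inv(\tu^{\tt m})$ and his description of $Z(\tu)$) that the partially contracted universal invariant lies in $\zz'$, the span of polynomials in the Casimir element. On $\zz'$, Kerler's computation of the center gives the crucial identity $\izpm(x\,\br^{\pm n}) = n\,\trzpm(x\,\br^{\pm n})$ (Lemma \ref{10}): by Lemma \ref{kk}, $\iz$ detects only the $N_i'$-components while $\trz$ detects only the $P_i$-components, and the $N_i'$-coefficient of $\br^{\pm n}$ is linear in $n$. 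Switching from integral to trace one component at a time then produces $|f_1|\cdots|f_m|=\h(M)$. So the ``extra colors'' heuristic behind your Gauss sum is morally what Lemma \ref{10} encodes, but it can only be implemented through the non-semisimple structure (the $N_i'$ directions), not through any splitting of $\iz$ into quantum traces.
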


The first indication that there might be such a relation is stated in Remark 2.11 of \cite{o3}. It is formulated explicitly in \cite{kerler4} as Conjecture 18. See also Problem 8.18 (1) of \cite{o5}. The special case, when $M$ is a lens space and the order of $\zeta$ is an odd prime, is proved in \cite{kerler3} Corollary 16.

\begin{rmk}
Kerler analyzed the center of quantum $\fsl2$ at any odd root of unity in \cite{kerler1}. The proof of Theorem \ref{thm1} is based on this result. Recently Feigin et al. gave similar analysis on the center of quantum $\fsl2$
at any even root of unity in \cite{feigin}. (We thank the referee for pointing out this reference to us.) One would guess that the same proof given in this paper should work for the even root case. But it turns out that an equation similar to (\ref{ex}) in this case is not true.  
\end{rmk}

\begin{rmk}
The WRT invariant and the Hennings invariant can be defined for higher rank quantum groups.
In that case (\ref{eqn1}) should still be true if we raise $\h(M)$ to a certain power. 
We will discuss this in a different paper.
\end{rmk}

\noindent{\em Acknowledgment}. The first author would like to thank L. Kauffman, T. Le, T. Ohtsuki, D. Radford and especially T. Kerler for helpful discussions. The authors also would like to thank the referee for many valuable suggestions.

\section{The WRT and the Hennings invariants}
In this section we will first review quantum $\fsl2$ at roots of unity and then give some quick account for the
associated WRT and Hennings invariants. Both invariants can be conveniently calculated through the universal quantum $\fsl2$ invariant of links.
 
\subsection{\boldmath Quantum $\fsl2$ at roots of unity}\label{secsl2}
Let $\ell>1$ be a positive {\em odd} integer and $\zeta$ be a primitive $\ell$-th root of unity. The quantum $\fsl2$ at $\zeta$, denoted $\uz$, is an algebra over $\bbq(\zeta)$ generated by $K, E$ and $F$ with relations:
$$
K^\ell = 1, \quad E^\ell = F^\ell = 0,
$$
$$
K E = \zeta^2 E K, \quad K F = \zeta^{-2} F K, \quad E F - F E = \frac{K-K^{-1}}{\zeta - \zeta^{-1}}.
$$
It can be given a ribbon Hopf algebra structure with the ribbon element $\br$ (see (\ref{r})) and the universal $R$-matrix
\begin{equation}\label{e:R}
\Rz = \Tz \Dz,
\end{equation}
where
\begin{equation}\label{tz}
\Tz = \sum_{m=0}^{\ell-1} \frac{(\zeta^{-1}-\zeta)^m}{\zeta^{\frac{m(m-1)}2} [m]!} F^m\otimes E^m
\end{equation}
is the so called quasi-$R$-matrix and
\begin{equation}\label{e:D}
\Dz = \frac1\ell\sum_{0\le i, j \le \ell-1} \zeta^{2ij} K^i\otimes K^j
\end{equation}
is the diagonal part of $\Rz$. Here the quantum integer is denoted $[j] = \frac{\zeta^j - \zeta^{-j}}{\zeta - \zeta^{-1}}$. One has
\begin{align}
(E^i\otimes 1) \Dz = \Dz (E^i\otimes K^i), \quad &
(1\otimes E^i) \Dz = \Dz (K^i\otimes E^i),  \label{e:sD1}\\
(F^i\otimes 1) \Dz = \Dz (F^i\otimes K^{-i}), \quad &
(1\otimes F^i) \Dz = \Dz (K^{-i}\otimes F^i). \label{e:sD2}
\end{align}

Next we follow \cite{kerler1} to describe the center $Z(\uz)$ of $\uz$. First of all it contains the Casimir element
\begin{equation*}
C = (\zeta - \zeta^{-1}) FE + \frac{\zeta K + \zeta^{-1} K^{-1}}{\zeta - \zeta^{-1}}.
\end{equation*}
Let $b_j = (\zeta^{2j+1}+\zeta^{-2j-1})/(\zeta-\zeta^{-1})$ and
\begin{equation}\label{e:min}
\phi(x) = \prod_{i=0}^{\ell-1} (x-b_i).
\end{equation}
Set
$$
\phi_j(x) = \prod_{\substack{0\le i \le \ell-1 \\ b_i\ne b_j}} (x-b_i), \quad 0\le j \le \hbar,
$$
where $\hbar = (\ell-1)/2$. Since $b_j = b_{\ell-1-j}$, $\deg(\phi_j) = \ell-2$ for $1\le j<\hbar$ and
$\deg(\phi_\hbar) = \ell-1$. Let
\begin{equation}\label{P}
P_j = \frac1{\phi_j(b_j)}\phi_j(C) - \frac{\phi_j'(b_j)}{\phi_j(b_j)^2} (C-b_j)\phi_j(C), \quad 0\le j \le \hbar,
\end{equation}
and
\begin{equation}\label{N}
N_j = \frac1{\phi_j(b_j)}(C-b_j)\phi_j(C), \quad 0\le j \le \hbar-1.
\end{equation}
To describe the other elements in $Z(\uz)$, we introduce the following polynomials of $K$:
$$
\pi_{\! j} = \frac1\ell \sum_{i=1}^\ell \zeta^{2ij}K^i, \quad 0\le j \le \ell-1,
$$
and
$$
T_j = \sum_{i=j+1}^{\ell-1-j} \pi_i, \quad 0\le j \le \hbar-1.
$$
Let
\begin{equation}\label{N'}
N_j' = T_j N_j, \quad 0\le j \le \hbar-1.
\end{equation}
It was shown in \cite{kerler1}, where $N_j'$ is denoted $N_j^+$,
that $Z(\uz)$ is a $(3\hbar+1)$-dimensional vector space with basis
\begin{equation}\label{zb}
\{P_i, N_j, N_j', \quad 0\le i \le \hbar, \quad 0\le j \le \hbar-1\}.
\end{equation}
The basis elements satisfy:
$$
P_i P_j = \delta_{ij} P_i, \quad P_i N_j = \delta_{ij} N_j, \quad P_i N_j' = \delta_{ij} N_j'
$$
and
$$
N_i N_j = N_i' N_j = N_i' N_j' = 0.
$$
To simplify expressions, from now on we consider $N_\hbar=N_\hbar'=0$ and use the convention
$$
N_\hbar/[\ell] = N_\hbar'/[\ell] = 0
$$
although $[\ell]=0$. The ribbon element and its inverse can be written as
\begin{equation}\label{r}
\br^{\pm 1} = \sum_{i = 0}^{\hbar} 
\zeta^{\pm 2i(i+1)}\left(P_i \pm \frac{2i+1-\ell}{[2i+1]} N_i \pm \frac\ell{[2i+1]} N_i'\right).
\end{equation}

For $i=0,1,\ldots, \ell-2$, denote by $\ve i$ the irreducible $(i+1)$-dimensional representation of $\uz$.

\subsection{\boldmath The universal quantum $\fsl2$ invariant of bottom tangles and links}\label{sss}
We follow \cite{habiro4}; also see \cite{habiro1}.
By a bottom tangle we mean
an oriented framed tangle, consisting of interval components only, embedded in $\bbr^2\times [0,1)$ 
such that the $i$-th component starts from $(0, 2i, 0)$ and ends at $(0, 2i-1,0)$. Two bottom tangles are equivalent if they are ambient isotopic relative to boundary.
For any bottom tangle, label its diagram by elements from $\uz$ 
according to Figure \ref{f3}, where $\Rz = \sum a\otimes b$ and $S$ are
the universal $R$-matrix and the antipode of $\uz$ respectively.
\begin{figure}[ht]
\centering
\psfrag{1}{\hspace{0ex}$a$}
\psfrag{2}{\hspace{0ex}$b$}
\psfrag{3}{\hspace{-2ex}$S(a)$}
\psfrag{4}{\hspace{-1ex}$b$}
\psfrag{5}{$K$}
\psfrag{6}{$K^{-1}$}
\includegraphics[width=\textwidth]{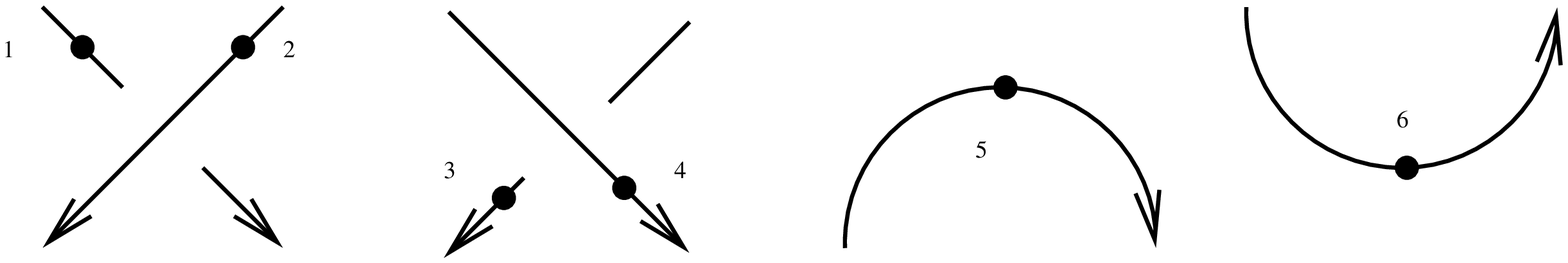}
\caption{}\label{f3}
\end{figure}
To get the universal invariant $\jz$ of bottom tangles associated to $\uz$, one follows the opposite of the orientation to multiply labels on each component. 
Hence if $T$ is a bottom tangle with $m$ components then
$\jz(T)$ belongs to $\uz^{\otimes m}$.
It is well known that if a $\pm1$ full twist is inserted to a component then the corresponding tensor factor in the universal invariant is multiplied by $\br^{\mp1}$.
In particular if $t_\pm$ is the 1-component bottom tangle with a $\pm1$ full twist then
\begin{equation}\label{ft}
\jz(t_\pm) = \br^{\mp 1}.
\end{equation} 

We want to extend the domain of $\jz$ to include framed links. 
In order to do that we need the following notion from \cite{habiro1}.
Suppose $A$ is a Hopf algebra. For any $A$-module $W$, denote by 
\begin{equation*}\label{wb}
\bar W := W/\{ x(w) - \epsilon(x)w: \forall x\in A,\ w\in W\}.
\end{equation*}
Denote by $\trir$ the left adjoint action of $A$ on itself, i.e.
$$
x\trir y = \sum_{(x)} x'y S_A(x''),\quad \forall x, y\in A,
$$
where Sweedler's notation is used for the coproduct $\Delta_A(x) = \sum_{(x)}x'\otimes x''$.
The universal quantum trace
$$
\tr_q : A \to \bar A
$$
is the canonical projection with $A$ considered as an $A$-module through $\trir$.
For any oriented framed link $L$, let $T_L$ be a bottom tangle whose canonical closure is $L$.
Set
$$
\jz(L) := \tr_q\otimes\cdots\otimes\tr_q (\jz(T_L)).
$$
It turns out that $\jz$ is an invariant of bottom tangles and links, cf. Section 7.3 of \cite{habiro1}.

\begin{rmk}\label{rk}
If $A$ is a ribbon Hopf algebra and $V$ is an $A$-module then the quantum trace $\tr_q^V(x) := \tr^V(\kappa x)$ factors
through $\bar A$. Here $\kappa$ is the associated grouplike element\footnote{The associated grouplike element of $\uz$ is $K^{-1}$.} of $A$, cf. Section 7.2 of \cite{habiro1}.
\end{rmk}

\subsection{The WRT invariant}\label{wrt}
We now briefly define the WRT $SO(3)$
invariant. Set 
$$
\omega = \omega_\zeta := \sum_{i=0}^{(\ell-3)/2} [2i+1] \ve {2i},
$$
a special element in the representation ring of $\uz$.
Suppose that $M$ can be obtained from $S^3$ by surgery along $L$. Impose an arbitrary orientation on $L$.
The WRT $SO(3)$ invariant of $M$ is
$$
\tz(M) := \frac{\trz\otimes\cdots\otimes\trz (\jz(L)).
}{\trz(\br^{-1})^{\sigma_+}\trz(\br)^{\sigma_-}},
$$ 
where $\sigma_+$ (resp. $\sigma_-$) is the number of positive (resp. negative) eigenvalues of $L$'s linking matrix.
The discrepancy of the signs in the denominator is due to (\ref{ft}).

\subsection{The Hennings invariant}\label{seccen}
The Hennings invariant is introduced in \cite{hennings} and further studied in \cite{kr, o3}. See also Section 12 of \cite{habiro1} for a review of this invariant. Instead of using the representation theory of $\uz$, the Hennings invariant makes use of a nonzero left integral for $\uz^*$. Recall that a left integral $\lambda$ for $A^*$, the dual of a finite dimensional Hopf algebra $A$, is an element in $A^*$ such that
$$
f\cdot \lambda = f(1) \lambda, \quad \forall f\ \in A^*.
$$
(Or equivalently $\lambda$ is determined by the equation
$$
(\id\otimes\lambda)\Delta(x) = 1_A\lambda(x), \quad\forall x\in A,
$$
where $\Delta$ and $1_A$ are the coproduct and the unit of $A$ respectively.)
Nonzero left integrals for $A^*$ exist and are unique up to scalar multiples.
Let $\iz = \int_{\uz}$ be the left integral of $\uz^*$ such that
\begin{equation}\label{i}
\iz F^a K^b E^c = \delta_{a,\ell-1}\delta_{b, 1} \delta_{c, \ell-1}.
\end{equation}

\begin{rmk}\label{rk1}
Suppose $A$ is unimodular, i.e. it has a left and right integral for $A$. In this case the left integrals for $A^*$ factor through $\bar A$, cf. Proposition 8 of \cite{larson}. It is easy to check that $F^{\ell-1}\pi_1 E^{\ell-1}$ is a left and right integral for $\uz$ so $\uz$ is unimodular.
\end{rmk}

Suppose $M$ can be obtained from $S^3$ by surgery along a framed link $L$. Impose an arbitrary orientation on $L$. The Hennings invariant of $M$ is
\begin{equation}\label{hh}
\pz(M) := \frac{\iz\otimes\cdots\otimes\iz\big(\jz(L)\big)}{(\iz \br^{-1})^{\sigma_+}(\iz \br)^{\sigma_-}},
\end{equation}
where $\sigma_+$ (resp. $\sigma_-$) is the number of positive (resp. negative) eigenvalues of $L$'s linking matrix.

\section{Proof of Theorem \ref{thm1}}
We first state a special case of Theorem \ref{thm1}. Recall that a framed link in $S^3$ is said to be {\em algebraically split} if its linking matrix is diagonal.

\begin{prop}\label{prop2}
Let $\zeta$ be a complex root of unity of odd order $>1$. Suppose $M$ is a rational homology 3-sphere which 
can be obtained from $S^3$ by surgery along an algebraically split link. 
Then (\ref{eqn1}) holds, i.e.
$\pz(M) = \h(M)\, \tz(M).$
\end{prop}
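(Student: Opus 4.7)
The plan is to exploit the algebraic splitness of $L$ to reduce to a component-by-component comparison between $\iz$ and $\trz$ on the center $Z(\uz)$, using the Kerler basis (\ref{zb}). Since $M$ is a rational homology sphere, all framings $f_i$ are nonzero and $\h(M)=\prod_{i=1}^n|f_i|$; moreover $\sigma_++\sigma_-=n$.

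First, the two functionals have complementary supports on $Z(\uz)$. Since $P_j$ and $N_j$ are polynomials in the Casimir $C$, and no normal-ordered power $C^k$ with $k\le\ell-1$ contains an $F^{\ell-1}KE^{\ell-1}$ term (its top-degree piece $((\zeta-\zeta^{-1})FE)^{\ell-1}$ normal-orders to $(\zeta-\zeta^{-1})^{\ell-1}F^{\ell-1}E^{\ell-1}$ with no $K$-factor), (\ref{i}) gives $\iz(P_j)=\iz(N_j)=0$. Conversely, $N_j$ and $N_j'$ annihilate every simple $\ve{2k}$, so $\trz(N_j)=\trz(N_j')=0$. A direct calculation, using $N_j'=T_jN_j$ and the $K^1$-coefficient of $T_j$, then yields a $j$-independent constant $c$ such that $\iz(N_j')=c\,[2j+1]\,\trz(P_j)$.

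Second, combining (\ref{r}) with the relations $N_i^2=N_iN_i'=(N_i')^2=0$ inside each block $\{P_i,N_i,N_i'\}$ (where $P_i$ is the local identity) gives
\[
\br^{\pm f}\big|_{\text{block }i}=\zeta^{\pm 2fi(i+1)}\Bigl(P_i\pm f\,\tfrac{2i+1-\ell}{[2i+1]}N_i\pm f\,\tfrac{\ell}{[2i+1]}N_i'\Bigr),
\]
so the $N_i'$-coefficient is linear in $f$. For an algebraically split link I would next argue that $\jz(L)$ is a linear combination of elementary tensors $w_1\br^{-f_1}\otimes\cdots\otimes w_n\br^{-f_n}$ with each $w_i\in\mathrm{span}\{P_j,N_j\}_j\subset Z(\uz)$. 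Using $P_jN_i'=\delta_{ij}N_i'$ and $N_jN_i'=0$, the $N_j'$-component of $w_i\br^{-f_i}$ is the product of the $P_j$-component of $w_i$ with the $N_j'$-component of $\br^{-f_i}$, so step one gives
\[
\iz(w_i\br^{-f_i})=-f_i\,c\,\ell\,\trz(w_i\br^{-f_i}).
\]
Taking products over $i$ and dividing by $(\iz\br^{-1})^{\sigma_+}(\iz\br)^{\sigma_-}=(-c\ell)^{\sigma_+}(c\ell)^{\sigma_-}(\trz\br^{-1})^{\sigma_+}(\trz\br)^{\sigma_-}$, the signs cancel (since $n+\sigma_--\sigma_+=2\sigma_-$ is even) to yield $\pz(M)=\prod_i|f_i|\,\tz(M)=\h(M)\,\tz(M)$.

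The main obstacle is the structural claim in the second step. The $N_j'$-direction is distinguished in $Z(\uz)$ by carrying nontrivial $K$-dependence through the factor $T_j$, and the intuition is that vanishing pairwise linking numbers force the $\Dz$-contributions between distinct components to cancel, leaving the inter-component central elements inside the Casimir-polynomial subalgebra $\mathrm{span}\{P_j,N_j\}_j$. Making this rigorous will require a careful bottom-tangle calculation of how $\Tz$ and $\Dz$ interact along closed loops when all mutual linking numbers vanish, using the commutation rules (\ref{e:sD1})--(\ref{e:sD2}) to slide the $K^{\pm 1}$, $E^m$, and $F^m$ factors past one another.
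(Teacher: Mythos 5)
Your first step and the final block computation are essentially correct and match the paper's Lemmas \ref{kk}, \ref{r1} and \ref{10}: indeed $\trz(P_j)=[2j+1]^2$ and $\iz(N_j')=c\,[2j+1]^3$ with $c=\zeta^2(\zeta-\zeta^{-1})^\ell/\ell^3$, so your relation $\iz(N_j')=c\,[2j+1]\,\trz(P_j)$ and your sign bookkeeping agree with (\ref{Np}) and (\ref{re}). The gap is exactly where you flag it, but it is worse than a missing computation: the structural claim is \emph{false}, not merely unproved. It is not true that the invariant $\jz(T^{(0)})$ of an algebraically split, $0$-framed bottom tangle lies in $\zz'^{\otimes m}$ (equivalently, is a sum of elementary tensors each of whose factors is a polynomial in $C$). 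What Habiro's Theorem 4.1 of \cite{habiro4} gives is only membership in the \emph{invariant part} of the $m$-fold tensor power under the diagonal adjoint action, and that space is strictly larger than $Z(\uz)^{\otimes m}$. Concretely: doubling a $0$-framed knot bottom tangle $T$ with $\jz(T)=z\in Z(\uz)$ yields a $2$-component bottom tangle with $0$ linking matrix whose invariant is $\Delta(z)$; already for the Casimir one checks $[\,1\otimes K,\ \Delta(C)\,]\neq 0$ (the mixed terms of $\Delta(F)\Delta(E)$ such as $E\otimes F$ do not cancel), so $\Delta(C)\notin \uz\otimes Z(\uz)$, let alone $\zz'\otimes\zz'$. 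The Borromean bottom tangle gives another counterexample. Consequently, no amount of careful sliding of $\Tz$ and $\Dz$ along the diagram can establish the decomposition you propose.

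The paper's way around this is its Lemma \ref{cen}, which is deliberately weaker: only \emph{after} contracting all but one tensor slot with functionals that factor through $\uzb$ does the remaining slot land in $\zz'$. This weaker statement suffices because the integral-to-trace exchange is performed one component at a time: the functionals $\trzpm^{\br^{-f_j}}$ and $\izpm^{\br^{-f_j}}$ themselves factor through $\uzb$ (Lemma \ref{ff}), so after each exchange the next partial contraction $z_i$ again lies in $\zz'$ and Lemma \ref{10} applies, producing the factor $|f_i|$ and hence $\h(M)=\prod_i|f_i|$. Moreover, even this weaker lemma is not proved by diagrammatics over $\uz$: its proof needs Habiro's integral form $\tu$, the theorem $\gh(T)\in\Inv(\tu^{\tt m})$, and Habiro's description of $Z(\tu)$ in \cite{habiro5} as spanned by the $\sigma_i$ with coefficients in $\A+(q-q^{-1})\A C_h$, which upon specializing $q=\zeta$ collapses into $\bbq(\zeta)[C]$ via the minimal polynomial (\ref{e:min}). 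That substantive input is what your proposal is missing; the rest of your argument then goes through exactly as in the paper.
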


A proof of Proposition \ref{prop2} is given in Section \ref{ls}.
Our main result follows easily from this proposition.

\begin{proof}[Proof of Theorem \ref{thm1}]
We divide the proof into three cases.

{\em Case 1}. Suppose that $\h(M) = 0$. Ohtsuki proved in \cite{o3} that in this case $\pz(M)=0$. See also \cite{kerler2}.

{\em Case 2}. Suppose that $\h(M)\ne0$ and $M$ can be obtained from $S^3$ by surgery along 
an algebraically split link. This case is just Proposition \ref{prop2}.

{\em Case 3}. Suppose that $\h(M) \ne 0$ and $M$ can not be obtained from $S^3$ by surgery along
an algebraically split link. According to \cite{o1}, there exist lens spaces 
$L(n_1, 1), \ldots, L(n_m, 1)$ such that
$$
M' = M\# L(n_1, 1)\# \cdots\# L(n_m, 1)
$$
can be obtained from $S^3$ by surgery along an algebraically split link. By case 2,
$\pz(M') = \h(M') \tz(M')$.
As $\tz$, $\pz$ and $\h$ are multiplicative with respect to connected sum we have
$$
\pz(M) \prod_{i=1}^m\pz\big(L(n_i,1)\big) = \h(M) \tz(M) \prod_{i=1}^m |n_i| \tz\big(L(n_i,1)\big).
$$
The formulas in \cite{lili2} indicate that $\tz(L(n_i,1))\ne 0$, where $\tz$ is denoted $\tau_\ell'$. Apply case 2 again, we have
$$
\pz\big(L(n_i,1)\big) = |n_i|\, \tz\big(L(n_i,1)\big).
$$
Hence (\ref{eqn1}) follows, concluding the proof of Theorem \ref{thm1}.
\end{proof}

\section{Proof of Proposition \ref{prop2}}
A bottom tangle is said to have 0 linking matrix if its natural closure has 0 linking matrix.
For any $m$-component bottom tangle $T$ with 0 linking matrix, $\jz(T)$ is {\em a priori} an element in $\uz^{\otimes m}$. The key point of the proof of Proposition \ref{prop2} is to show that the element actually belongs to a smaller set.

\subsection{\boldmath The universal quantum $\fsl2$ invariant of bottom tangles with 0 linking matrix}
Denote by $\zz'$ the subspace of $Z(\uz)$ spanned by polynomials of the Casimir element $C$. 

\begin{lemma}\label{b}
A basis of $\zz'$ is given by $\{P_i,\ N_j,\ 0\le i \le \hbar,\ 0\le j\le \hbar-1\}$.
\end{lemma}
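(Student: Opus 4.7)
The plan is to verify that $\{P_i, N_j : 0 \le i \le \hbar, 0 \le j \le \hbar - 1\}$ satisfies the three defining properties of a basis of $\zz'$: membership, linear independence, and spanning. The first two are immediate; the substantive work is in the spanning step.

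Membership in $\zz' = \bbq(\zeta)[C]$ is immediate from (\ref{P}) and (\ref{N}), which display each $P_i$ and $N_j$ explicitly as a polynomial in $C$. Linear independence is inherited from (\ref{zb}): since $\{P_i, N_j, N_j'\}$ is already a basis of $Z(\uz)$, any subset of it is independent.

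For spanning, I plan to argue by dimension. Writing $\zz' \cong \bbq(\zeta)[x]/(\mu(x))$, where $\mu$ is the minimal polynomial of $C$, one has $\dim \zz' = \deg \mu$. The exhibited independent set has $2\hbar + 1 = \ell$ elements, so it suffices to prove $\deg \mu \le \ell$, equivalently $\phi(C) = 0$ for the degree-$\ell$ polynomial $\phi$ of (\ref{e:min}). To obtain this, I first derive $1 = \sum_{i=0}^{\hbar} P_i$ by expanding $1 \in Z(\uz)$ in the basis (\ref{zb}) and isolating the coefficients via multiplication by each $P_j$, using the rules $P_iP_j = \delta_{ij}P_i$, $P_iN_j = \delta_{ij}N_j$, $P_iN_j' = \delta_{ij}N_j'$. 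Then $\phi(C) = \sum_i \phi(C) P_i$, and each summand is evaluated using the factorization $\phi(x) = (x-b_i)^{e_i}\phi_i(x)$ (with $e_i = 2$ for $i < \hbar$ and $e_\hbar = 1$) together with the identity $\phi_i(C) = \phi_i(b_i) P_i + \phi_i'(b_i) N_i$ (obtained by rearranging (\ref{P})) and the block structure of $C$ from \cite{kerler1}.

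The main obstacle is the calculation $\phi(C) P_i = 0$. It rests on a bound on the Jordan block sizes of $C$ acting on $\uz$: at most $2$ at $b_j$ for $j < \hbar$ and exactly $1$ at $b_\hbar$. This is the substantive input from Kerler's analysis; once it is in hand, $\phi(C) = 0$ follows, the bound $\dim \zz' \le \ell$ is immediate, and the dimension count closes out the proof that $\{P_i, N_j\}$ is indeed a basis.
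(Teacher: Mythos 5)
Your proof is correct in substance, but it is organized quite differently from the paper's, which is essentially a two-line citation: the paper quotes \cite{kerler1} for the fact that (\ref{e:min}) is the minimal polynomial of $C$ (hence $\dim\zz'=\ell$) and for the fact, (3.95) of \cite{kerler1}, that every polynomial of $C$ is a linear combination of the $P_i$ and $N_j$ (spanning); linear independence then comes for free from the count. You run the count in the opposite direction: independence is free because $\{P_i,N_j\}$ sits inside the basis (\ref{zb}) of $Z(\uz)$, and the work goes into the upper bound $\dim\zz'\le\ell$, i.e.\ $\phi(C)=0$, which you reduce (via $1=\sum_i P_i$ and $\phi(C)P_i=0$) to Kerler's block structure $CP_i=b_iP_i+N_i$, $N_i^2=0$, $N_\hbar=0$. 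What your route buys: you need strictly less from \cite{kerler1} --- only that $\phi$ annihilates $C$ (equivalently the Jordan block bound), not that $\phi$ is exactly the minimal polynomial, and you never invoke (3.95) --- and everything else you use (the basis (\ref{zb}) and the multiplication rules) is already reproduced in this paper. What the paper's route buys is brevity, since both inputs are quoted wholesale.

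One caveat: as written, your justification of the identity $\phi_i(C)=\phi_i(b_i)P_i+\phi_i'(b_i)N_i$ ``by rearranging (\ref{P})'' is circular at $i=\hbar$. For $i<\hbar$ the rearrangement is legitimate, because (\ref{N}) gives $(C-b_i)\phi_i(C)=\phi_i(b_i)N_i$; but $N_\hbar$ is not defined by (\ref{N}) --- it is $0$ by convention --- and rearranging (\ref{P}) for $j=\hbar$ gives
$\phi_\hbar(C)=\phi_\hbar(b_\hbar)P_\hbar+\frac{\phi_\hbar'(b_\hbar)}{\phi_\hbar(b_\hbar)}\,\phi(C)$,
so the claimed identity at $\hbar$ is equivalent to $\phi(C)=0$, the very statement being proved. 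The repair is easy and stays inside your toolkit: for $i=\hbar$ bypass the identity and use the block structure directly, $\phi(C)P_\hbar=\phi_\hbar(C)\,(C-b_\hbar)P_\hbar=\phi_\hbar(C)\,N_\hbar=0$. (Note also that your closing remark needs the spectrum of $C$ to be contained in $\{b_0,\dots,b_\hbar\}$, which a bare bound on block sizes does not give; it is exactly your decomposition $1=\sum_i P_i$ together with $CP_i=b_iP_i+N_i$ that supplies this.) With those two points tightened, the proof is complete.
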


\begin{proof}
According to \cite{kerler1}, (\ref{e:min}) is the minimal polynomial of $C$. Hence
$\zz'$ has dimension $\ell = 2\hbar+1$. The lemma follows from the fact
that every polynomial of $C$ is a linear combination of $P_i$ and $N_j$, cf.
(3.95) of \cite{kerler1}.
\end{proof}

The following lemma is the key to our main result.

\begin{lemma}\label{cen}
Suppose $T$ is a bottom tangle with 0 linking matrix. 
Then $(\id\otimes\chi_2\otimes\cdots\otimes\chi_m)(\jz(T))$ belongs to $\zz'$ where $\chi_j: \uz\to\bbq(\zeta)$ is linear and factors through $\uzb$, $2\le j\le m$, cf. Section \ref{sss}.
\end{lemma}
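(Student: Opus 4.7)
My plan is to prove the lemma in two stages: first establish that $Y := (\id\otimes\chi_2\otimes\cdots\otimes\chi_m)(\jz(T))$ lies in the center $Z(\uz)$, and then pin down that it actually lies in the smaller subspace $\zz'$.

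For centrality, the key input is the fundamental adjoint-invariance of the universal invariant of bottom tangles: $\jz(T)\in\uz^{\otimes m}$ is fixed by the diagonal left adjoint action of $\uz$, i.e., for every $x\in\uz$ with iterated coproduct $\Delta^{(m-1)}(x)=\sum x_{(1)}\otimes\cdots\otimes x_{(m)}$, one has $\sum (x_{(1)}\trir\jz(T)_1)\otimes\cdots\otimes(x_{(m)}\trir\jz(T)_m)=\epsilon(x)\,\jz(T)$. Apply $\id\otimes\chi_2\otimes\cdots\otimes\chi_m$ to both sides. Since each $\chi_j$ factors through $\uzb$, we have $\chi_j(x_{(j)}\trir y)=\epsilon(x_{(j)})\chi_j(y)$, and the counit axiom $(\id\otimes\epsilon\otimes\cdots\otimes\epsilon)\Delta^{(m-1)}=\id$ collapses the resulting product to $x\trir Y=\epsilon(x)\,Y$. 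This is exactly the centrality of $Y$.

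The main obstacle is the second stage: given that $Y\in Z(\uz)$, ruling out the $N_j'$ components in its expansion in the basis (\ref{zb}). Here the 0-linking-matrix hypothesis is essential. My approach is to use the factorization $\Rz=\Tz\Dz$ at each crossing and, with the help of the commutation relations (\ref{e:sD1})--(\ref{e:sD2}), gather all diagonal $\Dz$-contributions on one side of each tensor factor. Under the 0-linking hypothesis, the accumulated $K$-exponents delivered to the first tensor slot from the $\Dz$-factors organize into a $K$-dependence that is invariant under $K\mapsto K^{-1}$ (in a precise sense), matching the $K$-structure of the Casimir element $C$ and hence of $\zz'$, but incompatible with the asymmetric polynomials $T_j=\sum_{i=j+1}^{\ell-1-j}\pi_i$ that appear inside every $N_j'$.

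The hard part will be the detailed bookkeeping in this second step: although heuristically the 0-linking condition cancels the $K$-asymmetric contributions, making this rigorous probably requires either an induction on the number of crossings of $T$, a normal-form reduction for 0-linking bottom tangles, or the construction of an explicit linear functional on $Z(\uz)$ that vanishes on $\zz'$, detects the $N_j'$ summand, and can be shown to annihilate $Y$ directly from the diagonal part of $\jz(T)$. I expect the authors to leverage Kerler's explicit description of $Z(\uz)$ from \cite{kerler1}, together with the unimodularity of $\uz$ (Remark \ref{rk1}), to build such a detection functional, so that verifying that $Y$ has no $N_j'$ component reduces to a concrete computation that manifestly vanishes under the 0-linking hypothesis.
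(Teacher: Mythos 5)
Your stage 1 is correct but is not the content of the lemma. Adjoint invariance of $\jz(T)$ plus the fact that each $\chi_j$ factors through $\uzb$ does give $Y:=(\id\otimes\chi_2\otimes\cdots\otimes\chi_m)(\jz(T))\in Z(\uz)$; note, however, that this holds for \emph{every} bottom tangle, with no hypothesis on the linking matrix, so it cannot by itself prove the lemma. The entire point is the gap between $Z(\uz)$, of dimension $3\hbar+1$, and $\zz'$, of dimension $2\hbar+1$: one must kill the $N_j'$ components. Your stage 2, the only place where the 0-linking hypothesis could enter, is never actually proved --- you list candidate strategies (induction on crossings, a normal form, a detection functional) without carrying out any of them. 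That is a genuine gap, not a routine verification left to the reader.

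Worse, the one concrete mechanism you do propose is false. You call the polynomials $T_j=\sum_{i=j+1}^{\ell-1-j}\pi_i$ ``asymmetric'' under $K\mapsto K^{-1}$, hoping a symmetric $K$-dependence of $Y$ would exclude them. But $K\mapsto K^{-1}$ sends $\pi_i$ to $\pi_{\ell-i}$, and the index set $\{j+1,\ldots,\ell-1-j\}$ is invariant under $i\mapsto\ell-i$; hence each $T_j$ is \emph{invariant} under $K\mapsto K^{-1}$. Likewise the algebra automorphism $E\mapsto F$, $F\mapsto E$, $K\mapsto K^{-1}$ fixes $C$, hence fixes every $N_j$, and fixes $T_j$, hence fixes every $N_j'$; no such symmetry can separate $\zz'$ from the span of the $N_j'$. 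The paper's actual mechanism is integrality, not symmetry: by Theorem 4.1 of \cite{habiro4} (this is precisely where the 0 linking matrix is used), $\gh(T)$ lies in $\Inv(\tu^{\tt m})$ for Habiro's integral form $\tu$ over $\A=\bbz[q,q^{-1}]$; applying the functionals lands one in $Z(\tu)$, and by \cite{habiro5} every element of $Z(\tu)$ is a completed sum $\sum_i z_i\sigma_i$ of polynomials in the Casimir $C_h$; finally, a comparison of diagonal parts (using the 0 linking matrix a second time, to cancel the $\Dz$ and $\D_h$ contributions) identifies $\jz(T)$ with the specialization $q=\zeta$ of $\varpi_\ell^{\otimes m}(\gh(T))$, so that $Y\in\gamma\circ\varpi_\ell(Z(\tu))\subset\bbq(\zeta)[C]=\zz'$. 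In other words, the $N_j'$ are excluded because they do not come from central elements of the integral form --- a property invisible to any symmetry of $\uz$ over $\bbq(\zeta)$, and not something one should expect to extract from crossing-by-crossing bookkeeping with $\Rz=\Tz\Dz$; the proof genuinely rests on Habiro's theorems.
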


Before giving a proof of this lemma, we need to define three more versions of quantum $\fsl2$.
Let $h$ be a formal variable and $\uh=\uh(\fsl2)$ be an algebra over $\bbq[[h]]$, topologically generated by
$H$, $E$ and $F$ such that
$$
HE=E(H+2),\quad HF=F(H-2), \quad EF-FE = \frac{K-K^{-1}}{q-q^{-1}},
$$
where $K=\exp(hH/2)$ and $q=\exp(h/2)$. Denote by $\uh\hat\otimes\uh$ the $h$-adic completion of $\uh\otimes\uh$.
One can give $\uh$ a ribbon Hopf algebra structure with the universal $R$-matrix
$$
\R_h = \Theta_h \D_h \quad\in \uh\hat\otimes\uh,
$$
where 
\begin{equation}\label{th}
\Theta_h=\sum_{m=0}^{\infty} \frac{(q^{-1}-q)^m}{q^{\frac{m(m-1)}2} [m]!} F^m\otimes E^m
\end{equation}
is the quasi-$R$-matrix and
$\D_h = q^{-H\otimes H/2} = \exp(-\frac h4 H\otimes H)$ is the diagonal part of $\R_h$.
Here the quantum integer is denoted $[m]=\frac{q^m-q^{-m}}{q-q^{-1}}$, by abuse of notation. 
Similar to (\ref{e:sD1}, \ref{e:sD2}) one has
\begin{align}
(E^i\otimes 1) \D_h = \D_h (E^i\otimes K^i),\quad &
(1\otimes E^i) \D_h = \D_h (K^i\otimes E^i),\label{D1}\\
(F^i\otimes 1) \D_h = \D_h (F^i\otimes K^{-i}),\quad &
(1\otimes F^i) \D_h = \D_h (K^{-i}\otimes F^i).\label{D2}
\end{align}
One can use Figure \ref{f3} to define the universal invariant $\gh$ of bottom tangles and links associated to $\uh$.
Note that in this case one still uses $K$ and $K^{-1}$ to label the right oriented `cap' and `cup'.

The next version of quantum $\fsl2$ was introduced by Habiro in \cite{habiro5}.
Let $\A=\bbz[q, q^{-1}]$ and $\U$ be the $\A$-subalgebra of $\uh$ generated by
$$
K,\quad K^{-1},\quad e:=(q-q^{-1})E,\quad F^{(n)}:=F^n/[n]!,\quad n\in\bbz, n>0.
$$
Note that $\U\otimes\U$ contains the summands of $\Theta_h$.

The third version of quantum $\fsl2$ is a completion of $\U$:
\begin{equation}\label{tu}
\tu := \left\{\sum_{i=0}^\infty\sum_{j=1}^{N_i} x_{i,j}\in \uh: N_i\ge0,\ x_{i,j}\in\U e^i\U\right\}.
\end{equation}
Then $\tu$ inherits from $\uh$ a complete Hopf algebra structure over $\A$, cf. Section 2 of \cite{habiro4}. The completed tensor product $\tu^{\tt n}$ is the subset of $\uh^{\hat\otimes n}$ consisting of elements of the form
\begin{equation}\label{tun}
\sum_{i=0}^\infty x_{i,1}\otimes\cdots\otimes x_{i,n}
\end{equation}
with $x_{i,j}\in \U$ and at least one $x_{i,j}\in \U e^i\U$, $1\le j\le n$.
For any $\tu$-module $W$ let
$$
\Inv(W) := \{w\in W : x(w)=\epsilon(x) w,\ \forall x\in\tu\}
$$
be the invariant submodule.
Now we are ready to prove our key lemma.

\begin{proof}[Proof of Lemma \ref{cen}]
Since $T$ is a bottom tangle with 0 linking matrix, by Theorem 4.1 of \cite{habiro4}
\begin{equation}\label{h}
\gh(T)\in \Inv(\tu^{\tt m}),
\end{equation}
where $\tu^{\tt m}$ is considered as a $\tu$-module through the left adjoint action.
Next we discuss some properties of $\gh(T)$ implied by (\ref{h}).

For any positive integer $n$ let $\U_n := \U/\la e^n, F^{(n)}, K^n-1\ra$ and
$$
\varpi_n : \tu\to\tu/\la e^n\ra = \U/\la e^n\ra \to \U_n
$$
be the composition of the canonical projections, where the equality above is clear from (\ref{tu}).
By (\ref{tu}, \ref{tun}) one has
$$
(\id\otimes\varpi_n^{\otimes (m-1)})(\tu^{\tt m})\subset \tu\otimes\U_n^{\otimes (m-1)}.
$$
Then we have
$$
(\id\otimes\varpi_n^{\otimes (m-1)})(\gh(T))\in \Inv(\tu\otimes\U_n^{\otimes (m-1)}),
$$
thanks to (\ref{h}) and the fact that the ideal $\la e^n, F^{(n)}, K^n-1\ra$ is invariant under $\trir$.
According to Section 7.2 of \cite{habiro1}, $\bar\U_n$ inherits from $\U_n$ a trivial $\tu$-module structure.
Hence for linear maps $\chi_j': \U_n\to\A$ factoring through $\bar\U_n$ we have
$$
\Gamma_n(T;\chi_2',\ldots,\chi_m'):=(\id\otimes\chi_2'\circ\varpi_n\otimes\cdots\otimes\chi_m'\circ\varpi_n)(\gh(T))\in Z(\tu),
$$
where $Z(\tu) = \Inv(\tu)$\footnote{This can be checked easily on the generators.}
denotes the center of $\tu$. For $i\ge0$ let
$$
\sigma_i := \prod_{j=1}^i((q-q^{-1})^2C_h^2-(q^j+q^{-j})^2),
$$
where $C_h = Fe+\frac{qK+q^{-1}K^{-1}}{q-q^{-1}}$ is the Casimir element of $\uh$.
It was shown in \cite{habiro5} that every element in $Z(\tu)$
can be written as
$\sum_{i=0}^\infty z_i \sigma_i$
with $z_i\in \A+(q-q^{-1})\A C_h$. (He also showed in Proposition 9.4 of \cite{habiro5} that $\sigma_i\in \U e^i\U$ so
the infinite sum is in $\tu$.)
Therefore there exist $z_{i}\in \A+(q-q^{-1})\A C_h$ such that
\begin{equation}\label{gn1}
\Gamma_n(T;\chi_2',\ldots,\chi_m') = \sum_{i=0}^\infty z_{i}\sigma_i.
\end{equation}

Now let's compare the calculations of $\jz(T)$ and $\gh(T)$, cf. Figure \ref{f3}. 
One can use (\ref{e:sD1}, \ref{e:sD2}) and (\ref{D1}, \ref{D2}) to slide the diagonal parts $\Dz$ and $\D_h$ respectively to the initial boundary of each component of $T$. 
After the sliding the diagonal parts in $\jz(T)$ and $\gh(T)$ are canceled because $T$ has 0 linking matrix. 
Furthermore it is easily seen from (\ref{e:sD1}, \ref{e:sD2}) and (\ref{D1}, \ref{D2}) that 
the same powers of $K$ are inserted for $\jz(T)$ and $\gh(T)$ after the sliding. Therefore the difference between $\jz(T)$ and $\gh(T)$ is caused by the quasi-$R$-matrices only. For any $\A$-module $W$ let 
$$
\gamma : W\to W\otimes_\A\bbq(\zeta)
$$
be the change of the ground ring map by setting $q=\zeta$.
By comparing (\ref{tz}) and (\ref{th}) we conclude that
$$
\jz(T) = (\gamma\circ \varpi_\ell^{\otimes m})(\gh(T)).
$$
It is clear that there exists $\chi_j': \U_\ell\to \A$ factoring through $\bar\U_\ell$ such that $a_j\chi_j = \chi_j'\otimes \gamma$ for some $a_j\in \bbz[\zeta]$.
It follows that
\begin{equation}\label{T}
(\id\otimes\chi_2\otimes\cdots\otimes\chi_m)(\jz(T)) = \frac1{a_2\cdots a_m}(\gamma\circ(\varpi_\ell\otimes\id^{\otimes(m-1)}))(\Gamma_\ell(T;\chi_2', \ldots, \chi_m')).
\end{equation}
Since (\ref{e:min}) is the minimal polynomial of $C$, cf. \cite{kerler1}, and $\phi(C)$ divides $\gamma\circ\varpi_\ell(\sigma_i)$, $i\ge \ell$, we have
$$
\gamma\circ\varpi_\ell(Z(\tu))\subset \bbq(\zeta)[C].
$$
The lemma now follows from (\ref{T}) and (\ref{gn1}).
\end{proof}

\subsection{\boldmath The integral and trace of elements in $\zz'$}
Recall from Section \ref{wrt} that $\omega = \sum_{i=0}^{\hbar-1} [2i+1] \ve{2i}$.
When restricted to the center $Z(\uz)$, $\trz$ detects $P_i$ only and $\iz$ detects $N_i'$ only, i.e.

\begin{lemma}[\cite{kerler5}]\label{kk}
For $0\le i< \hbar$ and $0\le j\le \hbar$ we have
$$
\trz(N_i) = \trz(N_i') = 0,
$$
\begin{equation}\label{ex}
\iz(P_j) = \iz(N_i) = 0.
\end{equation}
\end{lemma}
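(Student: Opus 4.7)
The two assertions decouple completely: the trace identities reduce to an irreducible-by-irreducible computation, while the integral identities follow from a single $F$-degree estimate.

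For the trace statements, I would expand $\trz=\sum_{j=0}^{\hbar-1}[2j+1]\,\tr_q^{\ve{2j}}$ and use that the Casimir $C$ acts on $\ve{2j}$ as the scalar $b_j$ (a direct computation on the highest-weight vector, using the definition of $C$). Then $N_i=(C-b_i)\phi_i(C)/\phi_i(b_i)$ acts on $\ve{2j}$ as the scalar $(b_j-b_i)\phi_i(b_j)/\phi_i(b_i)$, which I would argue is zero in every admissible case. If $j=i$, the first factor dies; if $j\ne i$ and $b_j\ne b_i$, then $(x-b_j)$ is one of the factors in $\phi_i$, so $\phi_i(b_j)=0$; and the remaining case $b_j=b_i$, $j\ne i$ would force $i+j=\ell-1$, contradicting $i,j\le\hbar-1$. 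Hence $N_i$, and a fortiori $N_i'=T_iN_i$, annihilates every $\ve{2j}$ in $\omega$, yielding $\trz(N_i)=\trz(N_i')=0$.

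For the integral statements I would prove the stronger fact that $\iz$ vanishes on the whole of $\zz'$. By Lemma \ref{b}, every element of $\zz'$ is a polynomial in $C$ of degree at most $\ell-1$, so it is enough to establish $\iz(C^k)=0$ for $0\le k\le\ell-1$, which by \eqref{i} is the vanishing of the coefficient of $F^{\ell-1}KE^{\ell-1}$ in $C^k$. The observation that drives everything is that the two summands of $C$ commute: the defining relations $KF=\zeta^{-2}FK$ and $KE=\zeta^2 EK$ give $[K,FE]=0$, so
$$
C^k=\sum_{m=0}^{k}\binom{k}{m}(\zeta-\zeta^{-1})^{2m-k}(FE)^m(\zeta K+\zeta^{-1}K^{-1})^{k-m}.
$$
An easy induction from $EF^n=F^nE+(\text{terms of }F\text{-degree}<n)$ shows that the top $F$-degree piece of $(FE)^m$ is exactly $F^mE^m$, with no $K$-factor attached. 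Consequently $C^k$ has $F$-degree at most $k$, and for $k\le\ell-1$ the only possible $F^{\ell-1}$-term appears when $k=m=\ell-1$ and equals $(\zeta-\zeta^{-1})^{\ell-1}F^{\ell-1}E^{\ell-1}$; this term carries no factor of $K$, so the coefficient of $F^{\ell-1}KE^{\ell-1}$ is zero and $\iz(C^k)=0$.

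The step I expect to require most care is the bookkeeping for the top $F$-degree piece of $(FE)^m$, i.e.\ confirming that no $K$-correction sneaks in at leading order; once that is clear the degree count finishes the integral half, and the trace half is essentially formal from Kerler's identification of $b_j$ as the central character of $\ve{2j}$.
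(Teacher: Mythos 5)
Your proof is correct and takes essentially the same approach as the paper: the trace half is the paper's argument (the Casimir acts on $\ve{2j}$ as the scalar $b_j$, and $N_i$ contains every factor $(C-b_k)$ up to a nonzero scalar, so it annihilates each irreducible occurring in $\omega$), while the integral half establishes exactly the fact the paper asserts without proof, namely that a polynomial in $C$ of degree at most $\ell-1$ has vanishing coefficient on $F^{\ell-1}KE^{\ell-1}$ in the PBW basis. Your degree count via $[K,FE]=0$ and the binomial expansion of $C^k$ --- showing that the only $F$-degree-$(\ell-1)$ contribution, coming from $(FE)^{\ell-1}$ inside $C^{\ell-1}$, carries $K^0$ rather than $K^1$ --- is a sound justification of the step that the paper (which attributes the lemma to Kerler's private communication) leaves to the reader.
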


\begin{proof}
Recall that $\ve i$ is the irreducible $\uz$-module of dimension $i+1$. Then we have
$$
C|_{\ve i} = \frac{\zeta^{1+i}+\zeta^{-1-i}}{\zeta-\zeta^{-1}}\cdot\id_{\ve i} = b_{i'}\cdot \id_{\ve i},
$$
where $i'\equiv \frac i2 \mod \ell$. This implies $\tr_q^{\ve j} (N_i) = 0$, $j=0,\ldots, \ell-1$, because $N_i$
contains all the factors $(C-b_j)$. (Note that $b_j=b_{\ell-1-j}$.) Therefore $\trz(N_i) = 0$. Similarly we have
$\trz(N_i') = 0$. 
On the other hand since $N_i$ and $P_j$ do not contain the term $E^{\ell-1}K F^{\ell-1}$, $\iz(P_j) = \iz(N_i) = 0$ by (\ref{i}).
(Note that $P_\hbar = \phi_\hbar(C)/\phi_\hbar(b_\hbar)$ because $\phi(C)=0$.)
\end{proof}

The above lemma can be used to show the following relation. 

\begin{lemma}\label{r1}
We have
$$
\iz(\br^{\pm1}) = \pm\frac{\zeta^2}{\ell^2}\,(\zeta-\zeta^{-1})^\ell\, \trz(\br^{\pm1}).
$$
\end{lemma}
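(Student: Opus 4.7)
The plan is to substitute the explicit decomposition (\ref{r}) of $\br^{\pm 1}$ into both sides and invoke Lemma \ref{kk} to kill most of the summands. Since $\trz$ vanishes on $N_i$ and $N_i'$ while $\iz$ vanishes on $P_j$ and $N_i$, one immediately obtains
\begin{align*}
\trz(\br^{\pm 1}) &= \sum_{i=0}^{\hbar}\zeta^{\pm 2i(i+1)}\trz(P_i), \\
\iz(\br^{\pm 1}) &= \pm\sum_{i=0}^{\hbar-1}\zeta^{\pm 2i(i+1)}\frac{\ell}{[2i+1]}\iz(N_i').
\end{align*}
Because the weights $\zeta^{\pm 2i(i+1)}$ match on the two sides, the lemma reduces to the term-by-term identity $\frac{\ell}{[2i+1]}\iz(N_i') = \frac{\zeta^2}{\ell^2}(\zeta-\zeta^{-1})^\ell\trz(P_i)$ for each $0 \le i \le \hbar-1$.

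For the trace side, $P_i$ is the projector onto the $b_i$-eigenspace of $C$, and $C$ acts on $\ve{2j}$ as $b_j\cdot\id$ for $0\le j\le\hbar-1$. Hence $\trz(P_i) = [2i+1]\cdot\dim_q(\ve{2i}) = [2i+1]^2$. For the integral side, $\iz$ picks out the coefficient of $F^{\ell-1}KE^{\ell-1}$. Since $C = (\zeta-\zeta^{-1})FE + (\zeta K+\zeta^{-1}K^{-1})/(\zeta-\zeta^{-1})$ contributes at most $(1,1)$ to the $(F,E)$-bidegree per factor, only the leading monomial $C^{\ell-1}/\phi_i(b_i)$ in $N_i$ can contribute, and its $F^{\ell-1}E^{\ell-1}$ piece is $(\zeta-\zeta^{-1})^{\ell-1}F^{\ell-1}E^{\ell-1}/\phi_i(b_i)$. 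Expanding $T_i = \sum_{j=i+1}^{\ell-1-i}\pi_j$, using $K^k F^{\ell-1} = \zeta^{2k}F^{\ell-1}K^k$ (since $\zeta^{-2k(\ell-1)}=\zeta^{2k}$), and summing the geometric series $\sum_{j=i+1}^{\ell-1-i}\zeta^{2j} = -[2i+1]$ gives $\iz(T_i F^{\ell-1}E^{\ell-1}) = -\zeta^2[2i+1]/\ell$, hence
\[
\iz(N_i') = \frac{-\zeta^2(\zeta-\zeta^{-1})^{\ell-1}[2i+1]}{\ell\,\phi_i(b_i)}.
\]

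The remaining step, which I expect to be the main technical obstacle, is the closed-form evaluation of $\phi_i(b_i) = (b_i-b_\hbar)\prod_{k\ne i,\,k=0}^{\hbar-1}(b_i-b_k)^2$. The key input is the clean factorization $b_i - b_k = (\zeta-\zeta^{-1})[i-k][i+k+1]$, which expresses $\phi_i(b_i)$ as a product of quantum integers times a power of $(\zeta-\zeta^{-1})$. Using the reflection $[\ell-k] = -[k]$ to fold $[\hbar-i]!$ together with $[i+\hbar]!$, together with the Gauss-sum identity $\prod_{m=1}^{\ell-1}[m] = \ell/(\zeta-\zeta^{-1})^{\ell-1}$, these products should collapse to $\phi_i(b_i) = \ell^2/\bigl([2i+1]^2(\zeta-\zeta^{-1})^\ell\bigr)$. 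Substituting back and tracking the powers of $(\zeta-\zeta^{-1})$ then produces the claimed identity. The delicate bookkeeping of quantum factorials and signs is the only real difficulty; everything else is formal manipulation dictated by Lemma \ref{kk} and the PBW basis.
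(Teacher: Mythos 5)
You follow exactly the paper's route: expand $\br^{\pm1}$ by (\ref{r}), kill the unwanted terms by Lemma \ref{kk}, and evaluate $\trz(P_i)$ and $\iz(N_i')$ separately. Your intermediate formulas are all correct, and are in fact \emph{more} careful than the paper's own display (\ref{Np}): you correctly retain the leading PBW coefficient $(\zeta-\zeta^{-1})^{\ell-1}$ coming from $C^{\ell-1}$ inside $N_i$, and you correctly evaluate the geometric sum as $\sum_{j=i+1}^{\ell-1-i}\zeta^{2j}=-[2i+1]$; the paper's (\ref{Np}) omits the former and drops the sign of the latter.

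The gap is your final sentence: your own formulas do \emph{not} produce the claimed identity. Substituting $\iz(N_i')=-\zeta^2(\zeta-\zeta^{-1})^{\ell-1}[2i+1]/\bigl(\ell\,\phi_i(b_i)\bigr)$ and $\phi_i(b_i)=\ell^2/\bigl([2i+1]^2(\zeta-\zeta^{-1})^{\ell}\bigr)$ into your reduced term-by-term identity gives
\begin{equation*}
\frac{\ell}{[2i+1]}\,\iz(N_i') \;=\; -\frac{\zeta^2}{\ell^2}\,(\zeta-\zeta^{-1})^{2\ell-1}\,[2i+1]^2,
\end{equation*}
whereas the target is $+\frac{\zeta^2}{\ell^2}(\zeta-\zeta^{-1})^{\ell}[2i+1]^2$; the two differ by the factor $-(\zeta-\zeta^{-1})^{\ell-1}$, which is not $1$ (it equals $3$ when $\ell=3$). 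Carried out honestly, your computation therefore yields $\iz(\br^{\pm1})=\mp\frac{\zeta^2}{\ell^2}(\zeta-\zeta^{-1})^{2\ell-1}\trz(\br^{\pm1})$, contradicting the constant in the statement. The discrepancy is not an error on your side: a direct check at $\ell=3$ (computing the coefficient of $F^2KE^2$ in $T_0N_0$) gives $\iz(N_0')=-\zeta^2(\zeta-\zeta^{-1})/3$, which is three times the value asserted in (\ref{Np}), so the paper's (\ref{Np}), and with it the constant printed in Lemma \ref{r1}, is off by exactly this factor. This is harmless downstream, since Lemma \ref{10} and Proposition \ref{prop2} use only the normalized functionals $\izpm$ and $\trzpm$, in which any overall constant cancels; but as a proof of the literal statement your argument fails at the last step, and you should have flagged the mismatch instead of asserting that the powers of $(\zeta-\zeta^{-1})$ work out.
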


\begin{proof}
Since the Casimir element $C$ acts on $\ve{2j}$ as the scalar multiplication by $b_j$ one has
\begin{align}\label{Pq}
\tr_q^{\ve{2j}}(P_i) &= [2j+1] \left(\frac1{\phi_i(b_i)}-\frac{\phi'_i(b_i)}{\phi_i(b_i)^2}(b_j-b_i)\right)\phi_i(b_j) \nonumber\\
&= [2j+1]\,\delta_{i,j}.
\end{align}
This together with (\ref{r}) and Lemma \ref{kk}  gives
\begin{align}\label{aa}
\trz(\br^{\pm1}) &= \sum_{i=0}^\hbar \zeta^{\pm2i(i+1)}\trz(P_i) \nonumber\\
&= \sum_{i=0}^\hbar \zeta^{\pm2i(i+1)}\sum_{j=0}^{\hbar-1}[2j+1]\tr_q^{\ve{2j}}(P_i) \nonumber\\
&= \sum_{i=0}^\hbar \zeta^{\pm2i(i+1)} [2i+1]^2.
\end{align}
Note that $[2\hbar+1]=[\ell]=0$. On the other hand we have
\begin{align}\label{Np}
\iz(N_j') &= \iz (T_jN_j)
= \iz\Big(\frac{T_j}{\phi(b_j)}F^{\ell-1}E^{\ell-1}\Big)\nonumber\\
&= \frac{\zeta^2}{\ell\phi_j(b_j)}\sum_{i=j+1}^{\ell-1-j}\zeta^{2i}
= \frac{\zeta^2[2j+1]^3(\zeta-\zeta^{-1})^\ell}{\ell^3}.
\end{align}
Here we use $\phi_j(b_j) = \frac{\ell^2}{(\zeta-\zeta^{-1})^\ell[2j+1]^2}$. By (\ref{r}) and Lemma \ref{kk} we have
\begin{align}\label{aa1}
\iz(\br^{\pm1}) &= \pm\ell\sum_{i=0}^\hbar \frac{\zeta^{\pm2i(i+1)}}{[2i+1]}\iz (N_i') \nonumber\\
&= \pm \frac{\zeta^2}{\ell^2}(\zeta-\zeta^{-1})^\ell\sum_{i=0}^\hbar \zeta^{\pm2i(i+1)}[2i+1]^2.
\end{align}
The lemma follows from (\ref{aa}) and (\ref{aa1}).
\end{proof}

We introduce the following elements in $\uz^*$:
$$
\trzpm(x) =  \frac{\trz(x)}{\trz(\br^{\pm1})},\quad
\izpm(x) = \frac{\iz(x)}{\iz(\br^{\pm1})}, \quad \forall x \in \uz.
$$

\begin{lemma}\label{10}
For any $x$ in $\zz'$ and non-negative integer $n$,
\begin{equation}\label{eqn4}
\izpm (x\, \br^{\pm n}) \ =\  n\, \trzpm (x\, \br^{\pm n}).
\end{equation}
\end{lemma}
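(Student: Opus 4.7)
The plan is to expand both $x$ and $\br^{\pm n}$ in the basis of the center from Lemma \ref{b} (plus the $N_j'$'s), reduce each side to a finite sum of products of the evaluations already computed in Lemmas \ref{kk} and \ref{r1}, and then match them. Writing $\alpha_i = (2i+1-\ell)/[2i+1]$ and $\beta_i = \ell/[2i+1]$ (with the conventions $N_\hbar = N_\hbar' = 0$ and $\beta_\hbar = 0$), the formula (\ref{r}) reads $\br^{\pm 1} = \sum_i \zeta^{\pm 2i(i+1)}(P_i + Y_i^{\pm})$ with $Y_i^{\pm} = \pm\alpha_i N_i \pm \beta_i N_i'$. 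The relations $N_iN_j = N_iN_j' = N_i'N_j' = 0$ force $(Y_i^{\pm})^2 = 0$, and combined with $P_iY_i^{\pm} = Y_i^{\pm}$ and the orthogonality of the $P_i$, this yields
\begin{equation*}
\br^{\pm n} = \sum_{i=0}^{\hbar} \zeta^{\pm 2ni(i+1)} \bigl( P_i \pm n\alpha_i N_i \pm n\beta_i N_i' \bigr).
\end{equation*}

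Next, by Lemma \ref{b} write $x = \sum_j (u_jP_j + v_jN_j)$. Multiplying out using the same orthogonality relations and $N_jN_j = N_jN_j' = 0$ gives
\begin{equation*}
x\br^{\pm n} = \sum_{j=0}^{\hbar} \zeta^{\pm 2nj(j+1)} \bigl( u_j P_j + (v_j \pm n u_j \alpha_j) N_j \pm n u_j \beta_j N_j' \bigr).
\end{equation*}
Now apply $\trz$ and $\iz$. By Lemma \ref{kk} the $P_j$-coefficients are the only ones detected by $\trz$ and the $N_j'$-coefficients are the only ones detected by $\iz$, so
\begin{equation*}
\trz(x\br^{\pm n}) = \sum_j \zeta^{\pm 2nj(j+1)} u_j \trz(P_j), \qquad \iz(x\br^{\pm n}) = \pm n \sum_j \zeta^{\pm 2nj(j+1)} u_j \beta_j \iz(N_j').
\end{equation*}

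Using (\ref{Pq}) one has $\trz(P_j) = [2j+1]^2$ (vanishing at $j=\hbar$), and from (\ref{Np}) together with the definition of $\beta_j$ one obtains $\beta_j \iz(N_j') = \zeta^2(\zeta-\zeta^{-1})^\ell [2j+1]^2/\ell^2$ (likewise vanishing at $j=\hbar$ by convention). Therefore
\begin{equation*}
\iz(x\br^{\pm n}) = \pm n\,\frac{\zeta^2}{\ell^2}(\zeta-\zeta^{-1})^\ell\, \trz(x\br^{\pm n}),
\end{equation*}
and dividing by $\iz(\br^{\pm 1})$ on the left, $\trz(\br^{\pm 1})$ on the right, and invoking Lemma \ref{r1} to cancel the constant $\pm\zeta^2(\zeta-\zeta^{-1})^\ell/\ell^2$ produces (\ref{eqn4}). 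The case $n=0$ is immediate from (\ref{ex}) since both sides vanish.

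The only step demanding genuine care is the expansion of $\br^{\pm n}$: one must see that in each spectral block the non-semisimple contribution is square-zero, so that $\br^{\pm n}$ retains the same shape as $\br^{\pm 1}$ with only the $N_i'$ coefficient picking up the linear factor $n$ — this is exactly what makes the left-hand side of (\ref{eqn4}) scale by $n$ while the right-hand side scales by $n$ through the explicit factor; the matching of coefficients then comes down to the compatibility, furnished by Lemma \ref{r1}, of the integral on $N_j'$ with the trace on $P_j$.
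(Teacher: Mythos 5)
Your proof is correct and follows essentially the same route as the paper's: expand $\br^{\pm n}$ blockwise (the paper's equation (\ref{rn}), which you additionally justify via the square-zero observation), reduce $x$ to the basis of Lemma \ref{b}, kill terms with Lemma \ref{kk}, evaluate via (\ref{Pq}) and (\ref{Np}), and cancel the constant using Lemma \ref{r1}. The only cosmetic differences are that you carry a general linear combination $\sum_j(u_jP_j+v_jN_j)$ where the paper checks $x=P_i$ and $x=N_i$ separately, and that you make explicit the derivation of (\ref{rn}) that the paper leaves implicit.
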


\begin{proof}
By Lemma \ref{r1}, it is enough to show that for any $x$ in $\zz'$ and non-negative integer $n$ we have
\begin{equation}\label{re}
\iz(x \br^{\pm n}) = \pm \frac{n\zeta^2}{\ell^2}(\zeta-\zeta^{-1})^\ell\ \trz(x \br^{\pm n}).
\end{equation}
By Lemma \ref{b} it is enough to show (\ref{re}) for $x = P_i$ and $N_i$.

We show (\ref{re}) for $x=N_i$.
By (\ref{r}), we have
\begin{equation}\label{rn}
\br^{\pm n} = \sum_{i=0}^\hbar \zeta^{\pm 2i(i+1)n}\left(P_i \pm n \frac{2i+1-\ell}{[2i+1]}N_i \pm \frac{n\ell}{[2i+1]}N_i'\right).
\end{equation}
Hence by Lemma \ref{kk}
$$
\trz (N_i\, \br^{\pm n}) = \zeta^{\pm 2i(i+1)n}\, \trz (N_i) = 0,
$$
and
$$
\iz (N_i\,  \br^{\pm n}) = \zeta^{\pm 2i(i+1)n} \iz (N_i) = 0.
$$
Therefore (\ref{re}) holds if $x=N_i$.

We show (\ref{re}) for $x=P_i$.
By (\ref{rn}) 
$$
P_i\, \br^{\pm n} = \zeta^{\pm 2i(i+1)n} \left(P_i \pm n \frac{2i+1-\ell}{[2i+1]}N_i \pm\frac{n\ell}{[2i+1]}N_i'\right).
$$
Hence by Lemma \ref{kk} and (\ref{Pq})
\begin{equation*}
\trz (P_i\,  \br^{\pm n}) = \zeta^{\pm 2i(i+1)n}\, \trz (P_i)
= \zeta^{\pm 2i(i+1)n}\,[2i+1]^2.
\end{equation*}
On the other hand
\begin{align*}
\iz (P_i\,  \br^{\pm n}) &= \frac{\pm n \ell}{[2i+1]} \zeta^{\pm 2i(i+1)n}\, \iz (N'_i)\\
&= \frac{\pm n \ell}{[2i+1]} \zeta^{\pm 2i(i+1)n}\,\iz(N_i')\\
&= \pm \frac{n\zeta^2}{\ell^2}(\zeta-\zeta^{-1})^\ell\, \zeta^{\pm2i(i+1)n} [2i+1]^2 \quad\qquad\text{by (\ref{Np})}\\
&= \pm \frac{n\zeta^2}{\ell^2}(\zeta-\zeta^{-1})^\ell\ \trz(P_i \br^{\pm n}).
\end{align*}
Hence (\ref{re}) holds for $x=P_i$.
\end{proof}

\subsection{Quantum invariants of rational homology 3-spheres}\label{ls}
For any element $c\in \uz$ and $\mu\in \uz^*$, let $\mu^c$ be an element in $\uz^*$ defined by 
$$
(\mu^c)(x) = \mu(c x), \quad \forall x\in\uz.
$$
The following lemma is an easy extension of Remarks \ref{rk} and \ref{rk1}. We omit its proof.

\begin{lemma}\label{ff}
If $c\in Z(\uz)$ then $\trzpm^c$ and $\izpm^c$ factor through $\bar\uz$.
\end{lemma}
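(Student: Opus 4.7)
The plan is to reduce the lemma to Remarks \ref{rk} and \ref{rk1}, which say respectively that $\trz$ and $\iz$ already factor through $\bar\uz$. Since $\trzpm^c$ and $\izpm^c$ differ from $\trz^c$ and $\iz^c$ only by the nonzero scalars $\trz(\br^{\pm1})$ and $\iz(\br^{\pm1})$, it suffices to prove that $\trz^c$ and $\iz^c$ themselves annihilate every element of the form $x\trir y - \epsilon(x)y$.

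The key step is to commute the central element $c$ past the adjoint action. For $c\in Z(\uz)$ and any $x,y\in\uz$, centrality gives $cx' = x'c$ termwise in the Sweedler sum, so
\begin{equation*}
c(x\trir y) \;=\; c\sum_{(x)} x'\, y\, S(x'') \;=\; \sum_{(x)} x'\, (cy)\, S(x'') \;=\; x\trir (cy).
\end{equation*}
With this identity the verification is immediate: for any $x,y\in\uz$,
\begin{equation*}
\trz^c(x\trir y) \;=\; \trz\bigl(c(x\trir y)\bigr) \;=\; \trz\bigl(x\trir (cy)\bigr) \;=\; \epsilon(x)\trz(cy) \;=\; \epsilon(x)\trz^c(y),
\end{equation*}
the third equality being Remark \ref{rk}. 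The identical chain of equalities with $\iz$ in place of $\trz$, appealing to the unimodularity of $\uz$ and Remark \ref{rk1}, handles $\iz^c$.

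Since the argument reduces to the two previously-noted remarks together with a one-line commutation, there is no substantive obstacle here; the only thing to watch is that $c$ genuinely commutes with each of $x'$, $y$, and $S(x'')$ inside the Sweedler sum, which is automatic from $c\in Z(\uz)$.
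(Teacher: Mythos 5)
Your proof is correct and is exactly the argument the paper has in mind: the paper omits the proof, saying only that the lemma is ``an easy extension of Remarks \ref{rk} and \ref{rk1},'' and your centrality computation $c(x\trir y)=x\trir(cy)$ followed by an appeal to those two remarks (plus the harmless nonzero scalars $\trz(\br^{\pm1})$, $\iz(\br^{\pm1})$) is precisely that extension.
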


Now we are ready to prove Proposition \ref{prop2}.

\begin{proof}[Proof of Proposition \ref{prop2}]
Let $M$ be a rational homology 3-sphere which can be obtained from $S^3$ by surgery along
an algebraically split link $L$. Suppose $L$ has $m$ components with framings $f_1, \ldots, f_m$. Denote the sign of $f_i$ by $\e i$.
Denote by $T_L$ a bottom tangle whose natural closure is $L$.
Let $T_L^{(0)}$ be the bottom  tangle obtained from $T_L$ by changing the framing of each component to 0.
For any $1\le i\le m$ set
$$
z_i := \left(\bigotimes_{j=1}^{i-1}\trze{j}^{\br^{-f_j}}\otimes\id\otimes
\bigotimes_{k=i+1}^m\ize{k}^{\br^{-f_{k}}}\right)(\jz(T^{(0)})) 
$$
By Lemmas \ref{cen} and \ref{ff} we have $z_i\in\zz'$. Therefore by Lemma \ref{10} we have
\begin{equation}\label{L}
\ize i^{\br^{-f_i}} (z_i) = |f_i|\cdot \trze i^{\br^{-f_i}} (z_i).
\end{equation}
This equation shows that switching from $\ize i^{\br^{-f_i}}$ to $\trze i^{\br^{-f_i}}$ on the $i$-th component produces a factor $|f_i|$.
Hence by (\ref{hh}) we have
\begin{align*}
\pz(M) &= \left(\ize1\otimes\cdots\otimes\ize m\right)(\jz(L))\\
&=\left(\ize1^{\br^{-f_1}}\otimes\cdots\otimes\ize m^{\br^{-f_m}}\right)(\jz(T^{(0)})) \\
&=\ize1^{\br^{-f_1}}(z_1) = |f_1|\cdot\trze1^{\br^{-f_1}}(z_1) \hspace{5ex} \text{by (\ref{L})}\\
&= |f_1|\cdot\ize2^{\br^{-f_2}}(z_2) = |f_1f_2|\cdot\trze2^{\br^{-f_2}}(z_2)=\cdots \\
&= |f_1\cdots f_m| \cdot\trze{m}^{\br^{-f_m}}(z_m) \\
&= |f_1\cdots f_m|\cdot (\trze1^{\br^{-f_1}}\otimes\cdots\otimes\trze m^{\br^{-f_m}})(\jz(T^{(0)}))\\
&= |f_1\cdots f_m|\cdot (\trze1\otimes\cdots\otimes\trze m)(\jz(L)) \\
&= \h(M)\,\tz(M).
\end{align*}
This ends the proof of Proposition \ref{prop2}.
\end{proof}


\begin{thebibliography}{KM}

\bibitem[F]{feigin}
B. Feigin, A. Gainutdinov, A. Semikhatov, and I. Tipunin.
\newblock Modular group representations and fusion in logarithmic conformal
  field theories and in the quantum group center.
\newblock {\em Comm. Math. Phys.}, 265(1):47--93, 2006.

\bibitem[H1]{habiro1}
K. Habiro.
\newblock Bottom tangles and universal invariants.
\newblock {\em Algebr. Geom. Topol.}, 6:1113--1214 (electronic), 2006.

\bibitem[H2]{habiro5}
K. Habiro.
\newblock An integral form of the quantized enveloping algebra of {${\rm sl}\sb
  2$} and its completions.
\newblock {\em J. Pure Appl. Algebra}, 211(1):265--292, 2007.

\bibitem[H3]{habiro4}
K. Habiro.
\newblock A unified {W}itten-{R}eshetikhin-{T}uraev invariant for integral
  homology spheres.
\newblock {\em Invent. Math.}, 171(1):1--81, 2008.

\bibitem[He]{hennings}
M. Hennings.
\newblock Invariants of links and {$3$}-manifolds obtained from {H}opf
  algebras.
\newblock {\em J. London Math. Soc. (2)}, 54(3):594--624, 1996.

\bibitem[K1]{kerler5}
T. Kerler.
\newblock Private communication.

\bibitem[K2]{kerler1}
T. Kerler.
\newblock Mapping class group actions on quantum doubles.
\newblock {\em Comm. Math. Phys.}, 168(2):353--388, 1995.

\bibitem[K3]{kerler3}
T. Kerler.
\newblock Genealogy of non-perturbative quantum-invariants of {$3$}-manifolds:
  the surgical family.
\newblock In {\em Geometry and physics (Aarhus, 1995)}, volume 184 of {\em
  Lecture Notes in Pure and Appl. Math.}, pages 503--547. Dekker, New York,
  1997.

\bibitem[K4]{kerler2}
T. Kerler.
\newblock On the connectivity of cobordisms and half-projective {TQFT}'s.
\newblock {\em Comm. Math. Phys.}, 198(3):535--590, 1998.

\bibitem[K5]{kerler4}
T. Kerler.
\newblock Homology {TQFT}'s and the {A}lexander-{R}eidemeister invariant of
  3-manifolds via {H}opf algebras and skein theory.
\newblock {\em Canad. J. Math.}, 55(4):766--821, 2003.

\bibitem[KM]{km}
R. Kirby and P. Melvin.
\newblock The $3$-manifold invariants of {W}itten and {R}eshetikhin-{T}uraev
  for ${\rm sl}(2,{\bf {c}})$.
\newblock {\em Invent. Math.}, 105(3):473--545, 1991.

\bibitem[KR]{kr}
L. Kauffman and D. Radford.
\newblock Invariants of {$3$}-manifolds derived from finite-dimensional {H}opf
  algebras.
\newblock {\em J. Knot Theory Ramifications}, 4(1):131--162, 1995.

\bibitem[LL]{lili2}
B. Li and T. Li.
\newblock Generalized {G}aussian sums: {C}hern-{S}imons-{W}itten-{J}ones
  invariants of lens-spaces.
\newblock {\em J. Knot Theory Ramifications}, 5(2):183--224, 1996.

\bibitem[LS]{larson}
R. Larson and M. Sweedler.
\newblock An associative orthogonal bilinear form for {H}opf algebras.
\newblock {\em Amer. J. Math.}, 91:75--94, 1969.

\bibitem[O1]{o3}
T. Ohtsuki.
\newblock Invariants of 3-manifolds derived from universal invariants of framed
  links.
\newblock {\em Math. Proc. Cambridge Philos. Soc.}, 117(3):259--273, 1995.

\bibitem[O2]{o1}
T. Ohtsuki.
\newblock A polynomial invariant of rational homology $3$-spheres.
\newblock {\em Invent. Math.}, 123(2):241--257, 1996.

\bibitem[O3]{o5}
T. Ohtsuki.
\newblock Problems on invariants of knots and 3-manifolds.
\newblock In {\em Invariants of knots and 3-manifolds (Kyoto, 2001)}, volume~4
  of {\em Geom. Topol. Monogr.}, pages i--iv, 377--572. Geom. Topol. Publ.,
  Coventry, 2002.
\newblock With an introduction by J. Roberts.

\end{thebibliography}
\end{document}